\newcommand{\Z}{{\mathbb Z}}
\newcommand{\F}{{\mathbb F}}
\newtheorem{thm}{Theorem}[section]
\newtheorem{theorem}[thm]{Theorem}
\newtheorem{corollary}[thm]{Corollary}
\newtheorem{lemma}[thm]{Lemma}
\newtheorem{conjecture}[thm]{Conjecture}
\theoremstyle{definition}
\theoremstyle{remark}
\newtheorem*{lemma*}{Lemma}
\numberwithin{equation}{section}
\title{On the divisors of $x^n-1$ in $\F_p[x]$}
\author{Lola Thompson}
\address{Department of Mathematics\\ 
6188 Kemeny Hall\\
Dartmouth College\\
Hanover, NH 03755, USA}
\email[] {Lola.Thompson@Dartmouth.edu}
\begin{document}

\begin{abstract} 
\noindent In a recent paper, we considered integers $n$ for which the polynomial $x^n - 1$ has a divisor in $\Z[x]$ of every degree up to $n$, and we gave upper and lower bounds for their distribution. In this paper, we consider those $n$ for which the polynomial $x^n-1$ has a divisor in $\F_p[x]$ of every degree up to $n$, where $p$ is a rational prime. Assuming the validity of the Generalized Riemann Hypothesis, we show that such integers $n$ have asymptotic density $0$. \end{abstract}

\maketitle 

\section{Introduction and statement of results}

In a recent paper \cite{thompson}, we examined the question ``How often does $x^n-1$ have a divisor in $\Z[x]$ of every degree between $1$ and $n$?'' We called an integer $n$ with this property \textit{$\varphi$-practical} and showed that $$\#\{n \leq X: n \ \hbox{is} \ \varphi\hbox{-practical}\} \asymp \frac{X}{\log X}.$$ We examined variants of this question over other polynomial rings in \cite{pollackthompson} and \cite{thompson3}. In \cite{pollackthompson}, Pollack and I extended the notion of $\varphi$-practical by defining an integer $n$ to be $F$-practical if $x^n-1$ has a divisor of every degree between $1$ and $n$ over a number field $F$. We showed that, for any number field $F$, $$\#\{n \leq X : n \ \hbox{is} \ F\hbox{-practical}\} \asymp_F \frac{X}{\log X}.$$ 

We shifted our focus to fields with positive characteristic in \cite{thompson3}. For each rational prime $p$, we defined an integer $n$ to be \textit{$p$-practical} if $x^n-1$ has a divisor in $\F_[x]$ of every degree between $1$ and $n$. Since every $\varphi$-practical number is $p$-practical for all $p$, our work from \cite{thompson} immediately implies that $\#\{n \leq X: n \ \hbox{is} \ p\hbox{-practical}\}$ is at least a positive constant times $\frac{X}{\log X}$. Moreover, we showed in \cite{thompson3} that $$\#\{n \leq X : n \ \hbox{is} \ p\hbox{-practical for all} \ p\} \ll \frac{X}{\log X}$$ and that, for any fixed $p$, $$\#\{n \leq X: n \ \hbox{is} \ p\hbox{-practical but not} \ \varphi\hbox{-practical}\} \gg \frac{X}{\log X}.$$ The difficulty lies in finding an upper bound for the count of integers up to $X$ that are $p$-practical for an arbitrary but fixed prime $p$. This will be the subject of our present investigation.   

For each fixed prime $p$, we define $$F_p(X): = \# \{n \leq X : n \ \hbox{is} \ p\hbox{-practical}\}.$$ Computational data seem to suggest an estimate for the order of magnitude of $F_p(X)$. For example, when $p = 2$, we can use Sage to compute a table of ratios of $F_2(X)/\frac{X}{\log X}.$ 

\begin{table}[h!]

\begin{center}
    \begin{tabular}{ | l | l | c |}
    \hline
    $X$ & $F_2(X)$ & $F_2(X)/(X/\log X)$ \\ \hline
    $10^2$ & 34 & 1.565758\\
    $10^3$ & 243 & 1.678585\\
    $10^4$ & 1790 & 1.648651\\
    $10^5$ & 14703 & 1.692745\\
    $10^6$ & 120276 & 1.661674\\
    $10^7$ & 1030279 & 1.660614\\
    \hline
    \end{tabular}
\end{center}
\caption{Ratios for $2$-practicals}\label{table:f2}
\end{table}

The table looks similar for other small values of $p$. For example, when $p = 3, 5$ we have:

\begin{table}[h!]
\begin{center}
    \begin{tabular}{ | l | l | c |}
    \hline
    $X$ & $F_3(X)$ & $F_3(X)/(X/\log X)$ \\ \hline
    $10^2$ & 41 &1.888120\\
    $10^3$ & 258 & 1.782201\\
    $10^4$ & 1881 & 1.732465\\
    $10^5$ & 15069 & 1.734883\\
    $10^6$ & 127350 & 1.759405\\
    $10^7$ & 1080749 & 1.741962\\
    \hline
    \end{tabular}
\end{center}
\caption{Ratios for $3$-practicals}\label{table:f3}
\end{table}

\begin{table}[h!]
\begin{center}
    \begin{tabular}{ | l | l | c |}
    \hline
    $X$ & $F_5(X)$ & $F_5(X)/(X/\log X)$ \\ \hline
    $10^2$ & 46 & 2.118378\\
    $10^3$ & 286 & 1.975618\\
    $10^4$ & 2179 & 2.006933\\
    $10^5$ & 16847 & 1.939583\\
    $10^6$ & 141446 & 1.954149\\
    $10^7$ & 1223577 & 1.972173\\
    \hline
    \end{tabular}
\end{center}
\caption{Ratios for $5$-practicals}\label{table:f5}
\end{table}

The fact that the sequences of ratios appear to vary slowly suggests the following conjecture:

\begin{conjecture} For each prime $p$, $\lim_{X \rightarrow \infty} F_p(X)/\frac{X}{\log X}$ exists. \end{conjecture}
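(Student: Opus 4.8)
Write $n = p^a m$ with $p \nmid m$. Over $\F_p$ one has $x^n-1 = \prod_{d \mid m}\Phi_d(x)^{p^a}$, and each $\Phi_d$ splits into $\varphi(d)/\ell_d$ distinct monic irreducible factors of degree $\ell_d := \operatorname{ord}_d(p)$. Hence the degrees of the monic divisors of $x^n-1$ in $\F_p[x]$ are precisely the subset sums of the multiset in which the value $\ell_d$ occurs with multiplicity $p^a\varphi(d)/\ell_d$ for each $d\mid m$; the total here is $p^a\sum_{d\mid m}\varphi(d) = n$. Writing the distinct degrees as $1 = t_1 < t_2 < \dots < t_r$, the standard criterion for a multiset of positive integers to have all subset sums up to its total shows that $n$ is $p$-practical if and only if
\[
 t_{j+1} \;\le\; 1 + p^a\!\!\!\sum_{\substack{d\mid m\\ \operatorname{ord}_d(p)\le t_j}}\!\!\!\varphi(d)\qquad(1\le j<r).
\]
This is the exact analogue of the recursive criterion for ordinary practical numbers, but with the arithmetic function $d\mapsto\operatorname{ord}_d(p)$ replacing the identity, and any asymptotic analysis must start from it.

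The plan is to fit the set of $p$-practical numbers into the machinery of Saias and Weingartner for counting multiplicatively structured sets, which produces asymptotics of the shape $c\,x/\log x$: build the $p$-practical numbers from their prime factors in increasing order, run the associated iterated sieve --- equivalently, study the Dirichlet series $\sum_{n\ p\text{-practical}} n^{-s}$ near $s=1$ by a Selberg--Delange/Tauberian argument --- and identify the constant as a convergent Euler-product-type expression whose factor at a prime $q$ measures the proportion of prime powers $q^b$ that may be appended to a given $p$-practical number without violating the criterion above. For a ``typical'' $n$ the orders $\operatorname{ord}_q(p)$ are not much smaller than $q$, and then the criterion degenerates to the $\varphi$-practical criterion; the real work is to show that the $n$ for which some $\operatorname{ord}_q(p)$ is abnormally small contribute only to the error term and to a convergent correction to $c$. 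It would be sensible to test the method first on the (presumably easier) $\varphi$-practical numbers, where the orders play no role.

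Executing this requires Mertens-type estimates for sums like $\sum_{q\le y,\ \operatorname{ord}_q(p)\le z} 1/q$ and, more delicately, for such sums further restricted by congruences on $q$ that record where $q$ sits among the prime factors of $n$; these are controlled by the splitting of $q$ in the Kummer extensions $\Q(\zeta_k,p^{1/k})$, and the unconditional Chebotarev error terms are too weak to be summed over the ranges that arise --- precisely the obstruction that forces the Generalized Riemann Hypothesis into the density-$0$ result proved below. I expect the main obstacle to be twofold. First, the criterion couples the prime-power components of $n$ through least common multiples of orders, so the set is not genuinely multiplicative, and one must show this coupling is negligible on average. Second, one must prove that the candidate constant --- an infinite product of local densities, each itself an average over the possible orders of $p$ modulo powers of $q$ --- actually converges; this again reduces to bounding how often $\operatorname{ord}_q(p)$ is substantially smaller than $q$, hence to GRH-strength inputs applied with uniformity in the modulus, together with the compatibility of these bounds with the sieve decomposition. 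Establishing that convergence is, in my view, the crux.
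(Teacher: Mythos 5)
This statement is a conjecture: the paper does not prove it, and indeed only establishes the much weaker bound $F_p(X) = O\bigl(X\sqrt{\log\log X/\log X}\bigr)$ under GRH, so there is no proof of the author's to compare yours against. Your opening reduction is correct and matches the paper's Lemma \ref{pcondsum}: factoring $x^n-1 = \prod_{d\mid m}\Phi_d^{p^a}$ over $\F_p$ and invoking the standard complete-subset-sum criterion does give exactly the stated recursive characterization of $p$-practical numbers in terms of $d\mapsto \operatorname{ord}_d(p)$ (your grouping over $d\mid m$ with multiplicity $p^a\varphi(d)/\ell_d$ is equivalent to the paper's sum over all $d\mid n$ using $\ell_p^*$).

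Everything after that, however, is a research program rather than a proof, and the gap is essentially the entire argument. You propose to run a Saias--Weingartner-type iterated sieve and extract $c\,X/\log X$ from the Dirichlet series near $s=1$, but you supply no construction of the sieve, no proof that the coupling of prime-power components through $\operatorname{lcm}$'s of orders is negligible, no proof that the putative Euler-product constant converges, and no unconditional (or even GRH-conditional) Mertens-type estimate for $\sum_{q\le y,\ \operatorname{ord}_q(p)\le z} 1/q$ with the uniformity the sieve would demand --- you explicitly flag each of these as an expected obstacle rather than resolving it. These are precisely the difficulties that keep the statement a conjecture: the set of $p$-practical numbers is not multiplicatively closed in the way Weingartner's framework requires, and controlling the exceptional primes with $\operatorname{ord}_q(p)$ abnormally small is exactly where the paper already has to invoke GRH merely to get density zero. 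The outline is sensible as a direction (and testing it first on the $\varphi$-practical numbers, where the orders drop out, is a reasonable suggestion), but as it stands no step beyond the criterion is actually established.
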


The strongest bound that we have been able to prove in this vein is as follows:

\begin{theorem}\label{GRH20} Let $p$ be a prime number. Assuming that the Generalized Riemann Hypothesis holds, we have $F_p(X) = O\left(X\sqrt{\frac{\log \log X}{\log X}}\right).$ \end{theorem}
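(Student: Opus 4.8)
The plan is to exploit the explicit combinatorial description of $p$-practical numbers in terms of the multiplicative orders $\mathrm{ord}_d(p)$ for $d \mid n$. Recall that the irreducible factorization of $x^n-1$ over $\F_p$ groups the $n$-th roots of unity by their order $d$ (ranging over divisors of $n$ coprime to $p$), and the $\varphi(d)$ primitive $d$-th roots split into $\varphi(d)/\mathrm{ord}_d(p)$ irreducible factors each of degree $\mathrm{ord}_d(p)$. Writing $n = p^a m$ with $p \nmid m$, the set of degrees of divisors of $x^n-1$ in $\F_p[x]$ is the set of subset-sums of the multiset in which, for each $d \mid m$, the value $\mathrm{ord}_d(p)$ appears with multiplicity $p^a \varphi(d)/\mathrm{ord}_d(p)$. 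So $n$ is $p$-practical precisely when this multiset of prime-power-weighted orders has all subset sums filling $[1,n]$; in particular a necessary condition, analogous to the characterization of practical numbers, is that after sorting the distinct order-values $e_1 < e_2 < \cdots$ attainable among divisors $d\mid m$, each $e_{i+1}$ is at most $1$ plus the sum of all smaller weighted contributions. The key quantitative handle I would extract is: if $\ell$ is the smallest prime with $p \nmid \ell$ and $\ell \nmid m$ (so $\ell$ is small, bounded in terms of $p$), then for $n$ to be $p$-practical one typically needs $\ell \mid m$ unless $n$ is tiny; iterating, $m$ must be divisible by a long initial segment of primes, OR the orders $\mathrm{ord}_d(p)$ must be unusually small so that the weighted counts $\varphi(d)/\mathrm{ord}_d(p)$ are large enough to bridge gaps.

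First I would isolate the "arithmetically generic" obstruction: for a prime $q \mid m$, the factor $x^q - 1$ contributes, besides degree-$0$ and degree-involving-$1$ pieces, chunks of size $\mathrm{ord}_q(p)$ with multiplicity $(q-1)/\mathrm{ord}_q(p)$. For the subset-sum set to be an interval one needs, roughly, that the smallest order value occurring at "new" divisors does not exceed one more than everything accumulated so far. I would then show that for $n \le X$ outside a density-$0$ set, the divisor $d = q$, where $q$ is the least prime factor exceeding some slowly growing threshold, has $\mathrm{ord}_q(p)$ large — this is exactly where GRH enters. Concretely, the number of primes $q \le Y$ with $\mathrm{ord}_q(p) \le T$ is, under GRH, $O(T^2 \log(qT))$-ish via the effective Chebotarev/Hooley argument (primes $q$ with $\mathrm{ord}_q(p) \le T$ divide $\prod_{t\le T}(p^t-1)$, whose prime divisors number $O(T^2)$ unconditionally, but GRH is used to control how these are distributed in progressions so that $m$ cannot "reach" them cheaply). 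The upshot I would aim for is: for all but $O\!\left(X\sqrt{\log\log X/\log X}\right)$ integers $n\le X$, there is a divisor $d$ of $m$ beyond which the jump in required degrees cannot be covered, because the relevant multiplicative orders are too large relative to the accumulated count $\sum_{d' \mid m,\, d' < d} p^a \varphi(d')/\mathrm{ord}_{d'}(p)$.

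The technical core is a counting lemma of the following shape: the $p$-practical condition forces $m$ to have a restricted multiplicative structure — essentially $m$ must be built from primes $q$ for which $\mathrm{ord}_q(p)$ is small, together with a "practical-like" spacing condition on those primes. I would parametrize $n = p^a m$ and split according to the size of the largest prime factor $P(m)$ and the size of $\mathrm{ord}_{P(m)}(p)$. When $\mathrm{ord}_{P(m)}(p)$ is large (the typical case), the subset-sum interval breaks unless $n/P(m)$ is itself large enough to fill the gap up to $P(m)\cdot(\text{something})$; counting such $n$ via a Rankin-type / Mertens argument over the sparse set of admissible primes (those with small order, whose count is controlled by GRH as above) yields the bound $X\sqrt{\log\log X/\log X}$. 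The exponent $\tfrac12$ and the $\sqrt{\log\log X}$ factor should emerge from optimizing a sum of the form $\sum_{q \le X,\ \mathrm{ord}_q(p)\le T} 1/q \approx \sum_{t \le T} \#\{q: \mathrm{ord}_q(p) = t\}/\cdots$ against the Rankin parameter; this is the same flavor as Landau's estimate $\#\{n\le X: n \text{ practical}\}$-type savings, but the GRH-controlled sparsity of small-order primes upgrades the savings to a power of $\log X$.

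The main obstacle I anticipate is the combinatorial step: converting the clean statement "subset sums of the weighted order-multiset must fill $[1,n]$" into a usable structural constraint on the prime factorization of $m$. Unlike classical practical numbers, where the weights are the divisors themselves and the condition is the familiar $q_{i+1} \le \sigma(q_1^{a_1}\cdots q_i^{a_i}) + 1$ ordering, here the weights $\mathrm{ord}_d(p)$ are not monotone in $d$ and can be erratic, so one must argue that either these orders are "small enough to behave practically" (a sparse, GRH-countable event) or they create a genuine gap. Making this dichotomy quantitative — and in particular ruling out the possibility that moderately-large orders conspire via high multiplicities $\varphi(d)/\mathrm{ord}_d(p)$ to still tile the interval — is where the real work lies, and it is what forces both the appeal to GRH and the loss of a full power of $\log X$ (leaving the $\sqrt{\log\log X/\log X}$ rather than $1/\log X$).
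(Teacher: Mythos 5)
Your high-level intuition is right --- GRH is used to show that the relevant multiplicative orders are usually large, and large orders create a degree that no divisor of $x^n-1$ in $\F_p[x]$ can hit --- but the proposal has a genuine gap precisely at the point you flag yourself: the ``combinatorial step'' of turning the subset-sum condition into a quantitative obstruction is not an anticipated difficulty to be worked out later, it is the theorem, and the route you sketch for it (structural constraints on the prime factorization of $m$, analysis of $\mathrm{ord}_{P(m)}(p)$, a Rankin-type sum over primes $q$ with $\mathrm{ord}_q(p)\le T$) is not the mechanism that actually closes it. The paper's obstruction does not go through the prime factorization of $m$ at all. It goes through the \emph{divisor structure} of $n$: by Saias's theorem, all but $O(X\log Y/\log X)$ integers $n\le X$ have a gap $d_{j+1}/d_j>Y^2$ in their increasing sequence of divisors; the total degree accumulated by the divisors $d_1,\dots,d_j$ is trivially at most $1+\sum_{l\le j}\varphi(d_l)\le \tau(n)\,d_j$, which is small once $\tau(n)$ is truncated; and the monotonicity $d/\ell_p^*(d)\le n/\ell_p^*(n)$ (Friedlander--Pomerance--Shparlinski) converts a lower bound on $\ell_p^*(n)$ into a lower bound on $\ell_p^*(d_{j+i})$ for every divisor past the gap, so the degree $1+\sum_{l\le j}\varphi(d_l)$ is unattainable. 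Nothing in your outline supplies a substitute for the Saias input or for the monotonicity lemma, and without them the worry you raise --- that high multiplicities $\varphi(d)/\mathrm{ord}_d(p)$ might conspire to tile the interval --- is not ruled out.

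The analytic input is also different from what you propose. You want a GRH count of primes $q$ with $\mathrm{ord}_q(p)\le T$; the paper instead needs a near-maximality statement for the single quantity $\ell_p^*(n)$, namely that $\ell_p^*(n)>X/(Ye^{(\log X)^\theta})$ for all but $O(X/((\log X)^\theta\log\log X))$ of $n\le X$. This is assembled from three pieces: the Friedlander--Pomerance--Shparlinski lower bound $\lambda(n)>n\exp(-\Delta)$ for almost all $n$, the Luca--Pollack bound $\Omega(\varphi(n))<110(\log\log X)^2$ (which caps the smooth part of $\lambda(n)$), and the Li--Pomerance GRH machinery controlling $P(\lambda(n)/\ell_a^*(n))$ for composite moduli --- not an effective Chebotarev count over primes. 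Finally, the exponent $\tfrac12$ and the $\sqrt{\log\log X}$ do not come from a Rankin optimization over sparse primes; they come from balancing the Saias term $X\log Y/\log X$ with $Y=e^{110(\log X)^\theta(\log\log X)^2}$ against the exceptional-set term $X/((\log X)^\theta\log\log X)$, forcing $\theta$ to be essentially $\tfrac12$. As it stands the proposal is a plausible research plan, not a proof.
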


Here we use a version of the Generalized Riemann Hypothesis for Kummerian fields. The dependence on the GRH arises from a lemma of Li and Pomerance that we will use in Section 2. 

For ease of reference, we compile a list of the common notation that will be used throughout this paper. Let $n$ always represent a positive integer. Let $p$ and $q$, as well as any subscripted variations, be primes. Let $P(n)$ denote the largest prime factor of $n$, with $P(1) = 1$. We say that an integer $n$ is $B$-smooth if $P(n) \leq B$. We will use $P^-(n)$ to denote the smallest prime factor of $n$, with $P^-(1) = + \infty$. 

We will use several common arithmetic functions in this body of work. Let $\tau(n)$ denote the number of positive divisors of $n$. We use $\Omega(n)$ to denote the number of prime factors of $n$ counting multiplicity. Lastly, let $\lambda(n)$ denote the Carmichael $\lambda$-function, which represents the exponent of the multiplicative group of integers modulo $n$. 

%%Background and preliminary results
\section{Preliminary lemmas}

In this section, we provide some preliminary lemmas that will be used in the proof of Theorem \ref{GRH20}. We begin by discussing multiplicative orders and their connection to the $p$-practical numbers. Let $\ell_a(n)$ denote the multiplicative order of $a$ modulo $n$ for integers $a$ with $(a, n) = 1$. If $(a, n) > 1$, let $n_{(a)}$ represent the largest divisor of $n$ that is coprime to $a$, and let $\ell_a^*(n) = \ell_a(n_{(a)}).$ In particular, if $(a, n) = 1$, then $\ell_a^*(n) = \ell_a(n).$ In \cite{thompson3}, we gave an alternative characterization of the $p$-practical numbers in terms of the function $\ell_p^*(n)$, which we state here as a lemma:

\begin{lemma}\label{pcondsum} An integer $n$ is \textit{$p$-practical} if and only if every $m$ with $1 \leq m \leq n$ can be written as $m = \sum_{d \mid n} \ell_p^*(d) n_d$, where $n_d$ is an integer with $0 \leq n_d \leq \frac{\varphi(d)}{\ell_p^*(d)}.$\end{lemma}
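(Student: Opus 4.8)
The statement we want to prove is Lemma~\ref{pcondsum}, which characterizes $p$-practical numbers via sums of the form $m = \sum_{d \mid n} \ell_p^*(d) n_d$ with $0 \le n_d \le \varphi(d)/\ell_p^*(d)$. The natural approach is to translate the original definition — that $x^n - 1$ has a divisor in $\F_p[x]$ of every degree from $1$ to $n$ — into combinatorial data about the factorization of $x^n - 1$ over $\F_p$. First I would recall the factorization $x^n - 1 = \prod_{d \mid n} \Phi_d(x)$ over $\Q$, and then analyze how each cyclotomic polynomial $\Phi_d$ factors in $\F_p[x]$. Writing $n = p^a n'$ with $p \nmid n'$, one has $x^n - 1 = (x^{n'} - 1)^{p^a}$ in $\F_p[x]$, so divisors come from choosing, for each $d \mid n'$, a number of irreducible factors of $\Phi_d$ over $\F_p$ together with a multiplicity up to $p^a$; but it is cleaner to track everything through $\ell_p^*(d)$, which is exactly designed to absorb the $p$-part: $\ell_p^*(d) = \ell_p(d_{(p)})$ is the multiplicative order of $p$ modulo the prime-to-$p$ part of $d$.

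The key arithmetic fact I would invoke is that, for $d$ coprime to $p$, the cyclotomic polynomial $\Phi_d(x)$ factors over $\F_p$ into $\varphi(d)/\ell_p(d)$ distinct irreducible polynomials, each of degree $\ell_p(d)$. Summed over $d \mid n$ (handling the $p$-power multiplicities via $\ell_p^*$ and the bound on $n_d$), the complete list of monic irreducible factors of $x^n - 1$ in $\F_p[x]$, with multiplicity, consists — for each $d \mid n$ — of $\varphi(d)/\ell_p^*(d)$ irreducibles of degree $\ell_p^*(d)$. A monic divisor of $x^n-1$ is obtained by choosing a sub-multiset of these, and its degree is precisely $\sum_{d \mid n} \ell_p^*(d) n_d$ where $n_d$ counts how many of the degree-$\ell_p^*(d)$ factors are selected; since these factors are distinct (before accounting for the $p^a$-fold multiplicity, which simply multiplies the available count), $n_d$ ranges over $0 \le n_d \le \varphi(d)/\ell_p^*(d)$. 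Conversely, any such choice of the $n_d$ yields an actual divisor of the right degree. Hence $x^n - 1$ has a divisor of degree $m$ in $\F_p[x]$ if and only if $m$ is representable in the stated form, and $n$ is $p$-practical if and only if every $m \le n$ is so representable.

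The main obstacle is bookkeeping the $p$-power part carefully: when $p \mid n$, the factor $x^{n'} - 1$ appears to the $p^a$ power, so each irreducible factor of $\Phi_d$ ($d \mid n'$) is available with multiplicity $p^a$, and one must check that the resulting count of available degree-$\ell_p^*(d)$ factors, summed appropriately over all $d \mid n$ (not just $d \mid n'$), still matches $\sum_{d \mid n} \varphi(d)/\ell_p^*(d)$ with the claimed per-$d$ bounds — this uses the identity $\varphi(p^j d) = p^{j-1}(p-1)\varphi(d)$ together with $\ell_p^*(p^j d) = \ell_p^*(d)$ to repackage the multiplicities as contributions indexed by the divisors $d$ of $n$ itself. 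Since this lemma is quoted from \cite{thompson3}, I would in practice cite it directly; the sketch above is the proof one would reconstruct if needed. The only genuinely nontrivial input is the classical description of how $\Phi_d$ splits over $\F_p$, which is standard.
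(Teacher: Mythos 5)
Your proposal is correct. Note that the paper itself gives no proof of this lemma --- it is quoted verbatim from \cite{thompson3} --- so there is nothing to compare against here; but your sketch is the standard argument one would expect, and it correctly identifies the only delicate point, namely that when $n = p^a n'$ the multiplicity $p^a$ of each irreducible factor of $\Phi_{d'}$ (for $d' \mid n'$) is repackaged across the divisors $p^j d'$ of $n$ via $\sum_{j=0}^{a} \varphi(p^j) = p^a$ together with $\ell_p^*(p^j d') = \ell_p(d')$, so that the attainable degrees of monic divisors of $x^n-1$ in $\F_p[x]$ coincide exactly with the sums $\sum_{d\mid n} \ell_p^*(d) n_d$ subject to the stated bounds.
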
 

Throughout the remainder of this section, let $a > 1$ be an integer and let $A_q$ denote the set of primes $p \equiv 1 \pmod{q}$ with $a^{\frac{p-1}{q}} \equiv 1 \pmod{p}$. We will make use of several lemmas from \cite{lp}, which we state here for the sake of completeness.

\begin{lemma}[Li, Pomerance]\label{lp1} Let $\psi(X)$ be an arbitrary function for which $\psi(X) = o(X)$ and $\psi(X) \geq \log \log X.$ The number of integers $n \leq X$ divisible by a prime $p > \psi(X)$ with $\ell^*_a(p) < \frac{p^{1/2}}{\log p}$ is $O(\frac{X}{\log \psi(X)})$. \end{lemma}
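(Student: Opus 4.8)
The plan is to replace the count of integers $n$ by a sum of reciprocals of the ``bad'' primes, and then to exploit the fact that only very few primes can have a prescribed small multiplicative order. Let $\mathcal{B}$ denote the set of primes $p$ with $\psi(X) < p \le X$ and $\ell_a^*(p) < p^{1/2}/\log p$; a prime of the stated type that divides some $n \le X$ necessarily satisfies $p \le X$, hence lies in $\mathcal{B}$. The quantity we must estimate is therefore at most $\sum_{p \in \mathcal{B}} \lfloor X/p \rfloor \le X \sum_{p \in \mathcal{B}} \frac{1}{p}$, so it suffices to prove that $\sum_{p \in \mathcal{B}} \frac{1}{p} \ll_a \frac{1}{\log \psi(X)}$, the implied constant being allowed to depend on the fixed integer $a$.

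First I would dispose of the primes $p$ dividing $a(a-1)$: these are exactly the primes with $\ell_a^*(p) = 1$, there are $O(\log a)$ of them, and each contributes at most $1/\psi(X)$, for a total that is $O(\log a/\psi(X)) = O(1/\log\psi(X))$. For every other $p \in \mathcal{B}$ we have $p \nmid a$ and $t := \ell_a^*(p) = \ell_a(p) \ge 2$, so $t \mid p-1$ — in particular $p > t$ — and $p \mid a^t - 1$. Grouping these primes by the value of $t$ and writing $\mathcal{P}_t := \{p : \ell_a(p) = t\}$, we see that $\prod_{p \in \mathcal{P}_t} p$ divides $a^t - 1$, so $\prod_{p \in \mathcal{P}_t} p < a^t$; taking logarithms and using $p > t$ for each factor yields the key bound $\#\mathcal{P}_t < (t\log a)/\log t$. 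Finally, the inequality $t < p^{1/2}/\log p$ rearranges to $t^2(\log p)^2 < p$, which forces $p > t^2$, hence $\log p > 2\log t$, hence $p > 4 t^2 (\log t)^2$.

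Now I would split the remaining sum at $t = \psi(X)^{1/2}$. For $2 \le t \le \psi(X)^{1/2}$ I retain only the bound $p > \psi(X)$, so that the contribution of these $t$ is at most
\[
\sum_{2 \le t \le \psi(X)^{1/2}} \frac{\#\mathcal{P}_t}{\psi(X)}
\ll \frac{\log a}{\psi(X)} \sum_{t \le \psi(X)^{1/2}} \frac{t}{\log t}
\ll \frac{\log a}{\psi(X)} \cdot \frac{\psi(X)}{\log \psi(X)}
= \frac{\log a}{\log \psi(X)},
\]
using the elementary estimate $\sum_{t \le Y} t/\log t \ll Y^2/\log Y$. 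For $t > \psi(X)^{1/2}$ I use instead the bound $p > 4 t^2 (\log t)^2$ together with $\#\mathcal{P}_t < (t\log a)/\log t$, giving $\sum_{p \in \mathcal{P}_t} \frac{1}{p} \ll \log a/(t(\log t)^3)$; summing this convergent series over $t > \psi(X)^{1/2}$ produces a contribution that is $\ll \log a/(\log \psi(X))^2$. Adding the three pieces gives $\sum_{p \in \mathcal{B}} \frac{1}{p} \ll_a 1/\log\psi(X)$, and multiplying by $X$ completes the argument. (The hypothesis $\psi(X) \ge \log\log X$ enters only to guarantee that $\log\psi(X) \to \infty$, so that the bound improves on the trivial one.)

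The crucial step — and essentially the only one that is not routine bookkeeping — is the bound $\#\mathcal{P}_t \ll_a t/\log t$, that is, the fact that only $O_a(t)$ primes can have multiplicative order exactly $t$; this rests on $\prod_{\ell_a(p) = t} p \mid a^t - 1$. The secondary point requiring care is the choice of where to split: for small $t$ one must play $1/\psi(X)$ against the number of primes of order $t$, whereas for large $t$ one needs the a priori lower bound $p \gg t^2(\log t)^2$ — available precisely because we have assumed $\ell_a(p) < p^{1/2}/\log p$ — to ensure that $\sum_t \#\mathcal{P}_t/(t^2(\log t)^2)$ converges fast enough. Cutting at $t = \psi(X)^{1/2}$ balances these two effects and yields the saving $1/\log\psi(X)$.
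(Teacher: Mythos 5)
Your argument is correct: the paper states this lemma without proof (it is imported verbatim from Li and Pomerance), and what you give is essentially their original argument --- reduce the count to $X\sum_{p\in\mathcal{B}}1/p$, use $\prod_{\ell_a(p)=t}p\mid a^t-1$ to get $\#\{p:\ell_a(p)=t\}\ll_a t/\log t$, and split the sum over $t$ at $\psi(X)^{1/2}$, playing $p>\psi(X)$ against the number of primes of order $t$ for small $t$ and using $p\gg t^2(\log t)^2$ (which is where the hypothesis $\ell_a^*(p)<p^{1/2}/\log p$ enters) for large $t$. Your separate treatment of the primes with $\ell_a^*(p)=1$, i.e.\ $p\mid a(a-1)$, correctly accounts for the $\ell_a^*$ versus $\ell_a$ distinction, and all the estimates check out.
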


\begin{lemma}[Li, Pomerance]\label{lp2} The number of integers $n \leq X$ divisible by a prime $p \equiv 1 \pmod{q}$ with $$\frac{q^2}{4 \log^2 q} < p \leq q^2 \log^4 q$$ is $O(\frac{X \log \log q}{q \log q}).$ \end{lemma}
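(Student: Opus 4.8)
The plan is to bound the count by a reciprocal sum over the relevant primes and then estimate that sum via the Brun--Titchmarsh inequality together with partial summation. Write $S_q$ for the set of primes $p \equiv 1 \pmod q$ with $q^2/(4\log^2 q) < p \leq q^2\log^4 q$, and set $L = q^2/(4\log^2 q)$ and $U = q^2\log^4 q$. Since the number of $n \leq X$ divisible by a fixed prime $p$ is at most $X/p$, subadditivity of counting gives
$$\#\{n \leq X : p \mid n \text{ for some } p \in S_q\} \leq \sum_{p \in S_q} \frac{X}{p} = X \sum_{p \in S_q} \frac{1}{p}.$$
It therefore suffices to prove $\sum_{p \in S_q} 1/p \ll \frac{\log\log q}{q\log q}$, and we may assume $q$ is large, the finitely many small $q$ being absorbed into the implied constant.

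First I would record the key feature of the range $[L, U]$: on a logarithmic scale it is short, since $U/L = 4\log^6 q$ and hence $\log(U/L) = \log 4 + 6\log\log q \ll \log\log q$. Next, for $t \in [L, U]$ we have $t/q \in [q/(4\log^2 q),\, q\log^4 q]$, so $\log(t/q) = \log q + O(\log\log q) \asymp \log q$ uniformly across the range. The Brun--Titchmarsh inequality then yields, for all $t \in [L, U]$,
$$\pi(t; q, 1) \leq \frac{2t}{\varphi(q)\log(t/q)} \ll \frac{t}{q\log q},$$
using $\varphi(q) = q - 1 \gg q$ since $q$ is prime, where $\pi(t;q,1)$ counts the primes $p \leq t$ with $p \equiv 1 \pmod q$.

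Then I would apply partial summation to convert the reciprocal sum into an integral against $\pi(t; q, 1)$:
$$\sum_{p \in S_q} \frac{1}{p} = \left[\frac{\pi(t; q, 1)}{t}\right]_L^U + \int_L^U \frac{\pi(t; q, 1)}{t^2}\, dt.$$
The boundary term at $U$ is $\ll \frac{1}{q\log q}$, and the boundary term at $L$ is subtracted and nonnegative, so it only helps. Inserting the uniform Brun--Titchmarsh bound into the integral gives
$$\int_L^U \frac{\pi(t; q, 1)}{t^2}\, dt \ll \frac{1}{q\log q}\int_L^U \frac{dt}{t} = \frac{\log(U/L)}{q\log q} \ll \frac{\log\log q}{q\log q},$$
which dominates the boundary contribution. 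Combining the displays gives $\sum_{p \in S_q} 1/p \ll \frac{\log\log q}{q\log q}$ and hence the claimed bound on the count of $n$.

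The main point --- and the only place any arithmetic input is needed --- is the Brun--Titchmarsh estimate controlling the density of primes $p \equiv 1 \pmod q$; everything else is bookkeeping. Notably no GRH is required here, since Brun--Titchmarsh is unconditional and uniform enough in the range $p \asymp q^2$. The one subtlety to watch is the uniformity of the implied constant in $q$: the $\log\log q$ savings arises precisely from the narrowness of the interval on the logarithmic scale, so I would take care to apply Brun--Titchmarsh with the $\log(t/q)$ denominator (not $\log t$), as that is what pins the density at the sharp order $1/(q\log q)$ throughout the range.
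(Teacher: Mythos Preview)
Your argument is correct. The paper itself does not prove this lemma: it is quoted verbatim from Li--Pomerance \cite{lp} and stated without proof, so there is no in-paper argument to compare against. Your approach---bounding the count by $X\sum_{p\in S_q}1/p$, then controlling the reciprocal sum by Brun--Titchmarsh and partial summation, exploiting that the interval $[L,U]$ has logarithmic length $\ll \log\log q$---is exactly the standard route and is essentially what Li and Pomerance do in the cited paper. The care you take to use the $\log(t/q)$ denominator in Brun--Titchmarsh is appropriate, since $t\asymp q^2$ throughout the range and this is what pins $\pi(t;q,1)\ll t/(q\log q)$ uniformly.
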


\begin{lemma}[Li, Pomerance]\label{lp3}(GRH) Suppose that $q$ is an odd prime and that $a$ is not a $q^{th}$ power. The number of integers $n \leq X$ divisible by a prime $p \in A_q$ with $p \geq q^2 \log^4 q$ is $O\left(\frac{X}{q \log q} + \frac{X \log \log X}{q^2}\right).$ \end{lemma}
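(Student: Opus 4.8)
The statement is cited from \cite{lp}; here is how one establishes it. The plan is to pass from integers to the primes dividing them, and then to estimate a reciprocal sum of primes in $A_q$ via an effective form of the Chebotarev density theorem for a Kummer extension.

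First I would reduce the count to a reciprocal prime sum. Every $n \le X$ that is divisible by some $p \in A_q$ with $p \ge q^2 \log^4 q$ is counted among $\sum_p \lfloor X/p \rfloor$, where $p$ ranges over the primes of $A_q$ with $q^2 \log^4 q \le p \le X$ (we may assume $p \le X$, else no such $n$ exists). Thus the quantity to be bounded is at most $X$ times a reciprocal sum, and everything reduces to proving
$$\sum_{\substack{p \in A_q \\ q^2 \log^4 q \le p \le X}} \frac{1}{p} = O\!\left(\frac{1}{q \log q} + \frac{\log\log X}{q^2}\right).$$
Next I would interpret membership in $A_q$ field-theoretically. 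Set $L = \Q(\zeta_q, a^{1/q})$. An unramified prime $p$ lies in $A_q$ exactly when $p \equiv 1 \pmod q$ and $a$ is a $q$-th power modulo $p$, which is precisely the condition that $p$ split completely in $L$. Since $a$ is not a $q$-th power and $q$ is prime, $x^q - a$ is irreducible and $\Q(\zeta_q)$, $\Q(a^{1/q})$ have coprime degrees, so $[L:\Q] = q(q-1)$ and the primes splitting completely in $L$ have natural density $1/(q(q-1))$; the finitely many ramified primes are negligible.

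Then I would invoke the GRH-conditional effective Chebotarev density theorem (Lagarias--Odlyzko, in the sharpened form of Serre) for the trivial conjugacy class, obtaining $\pi_{A_q}(t) = \operatorname{li}(t)/(q(q-1)) + E(t)$ with $|E(t)| \ll \frac{1}{q(q-1)} \sqrt{t}\,(\log d_L + [L:\Q]\log t)$. The crucial uniformity input is the discriminant bound $\log d_L \ll_a q^2 \log q$ for the Kummer field, whose ramified primes divide $qa$; together with $[L:\Q] \asymp q^2$ this gives $|E(t)| \ll \sqrt{t}\,\log t$ for every $t \ge q^2\log^4 q$, uniformly in $q$ (for fixed $a$). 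Writing $y = q^2\log^4 q$ and applying Abel summation, the main term contributes $\frac{1}{q(q-1)}\bigl(\log\log X - \log\log y\bigr) = O(\log\log X / q^2)$, while the error term is governed by the lower endpoint: since $\sqrt{y} = q\log^2 q$ and $\log y \asymp \log q$, one has $|E(y)|/y + \int_y^\infty |E(t)|\,t^{-2}\,dt \ll \log y/\sqrt{y} \ll 1/(q\log q)$, the endpoint term at $X$ being $O(\log X/\sqrt{X})$. Multiplying through by $X$ yields the stated bound.

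The main obstacle is the uniformity in $q$. One must control the discriminant and degree of the Kummer extension sharply enough that, after the $1/q^2$ Chebotarev prefactor, the error collapses to $O(\sqrt{t}\,\log t)$, and one must check that the cutoff $q^2\log^4 q$ is calibrated so that $\log y/\sqrt{y}$ lands exactly on $1/(q\log q)$. This balancing of the two error sources against the two terms of the stated estimate is the delicate point; the reduction to a reciprocal prime sum and the partial summation are routine.
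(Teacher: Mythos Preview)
The paper does not prove this lemma; it merely quotes it from Li--Pomerance \cite{lp}, so there is no in-paper proof to compare against. Your sketch is a faithful outline of the argument in \cite{lp} (which in turn follows Hooley's conditional method for Artin's conjecture): reduce to the reciprocal sum $\sum_{p\in A_q,\,p\ge y}1/p$, identify $A_q$ with the primes splitting completely in the Kummer field $L=\Q(\zeta_q,a^{1/q})$ of degree $q(q-1)$, and then apply the GRH-conditional effective Chebotarev estimate $\pi_{A_q}(t)=\mathrm{li}(t)/(q(q-1))+O(\sqrt{t}\log(qt))$ together with partial summation from $y=q^2\log^4 q$ to $X$. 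The discriminant bound $\log|d_L|\ll_a q^2\log q$ and the calibration $\log y/\sqrt{y}\asymp 1/(q\log q)$ are exactly the points that make the two terms of the asserted bound appear, and you have identified them correctly.

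One small remark on presentation: the exact form you wrote for the Chebotarev error, with the prefactor $1/(q(q-1))$ multiplying $\sqrt{t}(\log d_L+[L:\Q]\log t)$, is not how Lagarias--Odlyzko/Serre is usually stated; the cleaner way (and what Hooley and Li--Pomerance actually use) is $|E(t)|\ll \sqrt{t}\log(qt)$, obtained from GRH for the Dedekind zeta function of $L$ together with $\tfrac{1}{n_L}\log|d_L|\ll_a \log q$. Either formulation leads to $|E(t)|\ll\sqrt{t}\log t$ in the range $t\ge q^2\log^4 q$, so the computation is unaffected.
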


Next, we present a version of Proposition 1 from Li and Pomerance's paper \cite{lp}, which will play an important role in obtaining the bound stated in Theorem \ref{GRH20}. As in \cite{lp}, our lemma will make use of Lemma \ref{lp3}; thus, it will depend on the validity of the Generalized Riemann Hypothesis.

\begin{lemma}\label{prop1}(GRH) Let $a$ be a positive integer. Let $\psi(X)$ be defined as in Lemma \ref{lp1}. The number of integers $n \leq X$ with $P(\frac{\lambda(n)}{\ell^*_a(n)}) \geq \psi(X)$ is $O(\frac{X \log \log \psi(X)}{\log \psi(X)}).$ \end{lemma}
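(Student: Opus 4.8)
The plan is to bound the number of $n \leq X$ for which $P(\lambda(n)/\ell_a^*(n)) \geq \psi(X)$ by splitting according to which prime $q \geq \psi(X)$ divides $\lambda(n)/\ell_a^*(n)$, and then for each such $q$ to locate a prime $p \mid n$ that "witnesses" $q$ in a way that lets us invoke Lemmas \ref{lp1}--\ref{lp3}. First I would recall the standard description of $\lambda(n)$ as the lcm of $\lambda(p^k)$ over prime powers $p^k \| n$, together with the analogous fact that $\ell_a^*(n)$ is the lcm of the $\ell_a(p^k)$ over the prime powers $p^k \| n$ with $p \nmid a$. Consequently, if a prime $q$ divides $\lambda(n)/\ell_a^*(n)$, then the exact power of $q$ dividing $\lambda(n)$ exceeds the exact power of $q$ dividing $\ell_a^*(n)$; tracing this back, there must be a prime power $p^k \| n$ with $p \nmid a$ such that $q \mid \lambda(p^k)$ but the power of $q$ in $\ell_a(p^k)$ is smaller than the power of $q$ in $\lambda(p^k)$ (the contribution of primes dividing $a$ is negligible and handled separately, as in \cite{lp}). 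For $q$ large compared to the relevant prime powers, the cases $k \geq 2$ contribute $O(X/\psi(X))$ by a crude count of $n$ divisible by $p^2$ with $p \equiv 1 \pmod q$, so the main case is $k=1$: there is a prime $p \mid n$ with $p \equiv 1 \pmod q$ and $\ell_a(p) \mid \frac{p-1}{q}$, i.e. $a^{(p-1)/q} \equiv 1 \pmod p$, which says precisely $p \in A_q$ (when $a$ is not a $q$th power; the case that $a$ is a perfect power is, as usual, dispatched separately and contributes a negligible amount).

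Next I would dispose of the primes $q$ in a bounded range and of small $p$. Summing the trivial bound "$\#\{n \leq X : p \mid n, \ p \equiv 1 \pmod q\} \leq X/p$" over $p \equiv 1 \pmod q$ shows that $q$'s with $q \leq \psi(X)$ are excluded by hypothesis, and more importantly we need only worry about $q$ up to, say, $X$ (for $q > X$ there is no admissible $p \leq X$). For each $q$ with $\psi(X) \leq q \leq X$, I would split the witnessing prime $p$ into three ranges dictated exactly by the Li--Pomerance lemmas: $p \leq \frac{q^2}{4\log^2 q}$, the middle range $\frac{q^2}{4\log^2 q} < p \leq q^2 \log^4 q$, and $p \geq q^2 \log^4 q$. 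In the smallest range, $\ell_a^*(p) \leq \frac{p-1}{q} < \frac{p-1}{\psi(X)} \leq \frac{p^{1/2}}{\log p}$ once $p$ (hence $q$) is large, because $p < q^2/(4\log^2 q)$ forces $p^{1/2}/\log p$ to dominate $p/q$; so these $n$ are counted by Lemma \ref{lp1}, contributing $O(X/\log\psi(X))$ \emph{in total} (not per $q$). The middle range is handled by Lemma \ref{lp2}, giving $O\!\left(\frac{X \log\log q}{q \log q}\right)$ for each $q$, and the top range by Lemma \ref{lp3}, giving $O\!\left(\frac{X}{q\log q} + \frac{X\log\log X}{q^2}\right)$ for each $q$.

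The final step is to sum these per-$q$ bounds over $\psi(X) \leq q \leq X$. The terms $\frac{X\log\log X}{q^2}$ sum to $O\!\left(\frac{X\log\log X}{\psi(X)\log\psi(X)}\right)$ or better (by $\sum_{q \geq \psi} 1/q^2 \ll 1/(\psi\log\psi)$, using that $q$ runs over primes), which is $o\!\left(\frac{X\log\log\psi(X)}{\log\psi(X)}\right)$. The dominant contribution comes from the $\frac{X\log\log q}{q\log q}$ and $\frac{X}{q\log q}$ terms: by Mertens-type estimates, $\sum_{\psi(X) \leq q \leq X} \frac{\log\log q}{q \log q} \asymp \frac{\log\log\psi(X)}{\log\psi(X)}$ (the sum is controlled by its behavior near the lower endpoint $q = \psi(X)$, since $\sum_{q \leq t} 1/(q\log q)$ is $O(\log\log t / \log\text{(something)})$—more precisely one uses $\sum_{q > y} \frac{1}{q\log q} \ll \frac{1}{\log y}$ via partial summation from the prime number theorem). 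Combining all pieces yields the claimed bound $O\!\left(\frac{X\log\log\psi(X)}{\log\psi(X)}\right)$. The main obstacle, and the step requiring the most care, is the reduction in the first paragraph: correctly extracting from "$q \mid \lambda(n)/\ell_a^*(n)$" a single witnessing prime $p \mid n$ with $p \in A_q$, handling the prime-power ($k \geq 2$) contributions and the primes dividing $a$, and the separate treatment of the case where $a$ is a perfect power—this is where one must follow the structure of Proposition 1 of \cite{lp} most closely. The summation over $q$ is then essentially bookkeeping with standard analytic estimates.
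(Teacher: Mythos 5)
Your proposal is correct and follows essentially the same route as the paper's proof: extract from $q \mid \lambda(n)/\ell_a^*(n)$ a witnessing prime $p \in A_q$ dividing $n$ (relegating repeated prime factors to an $O(X/\psi(X))$ count), eliminate small $p$ via Lemma \ref{lp1}, apply Lemmas \ref{lp2} and \ref{lp3} in the middle and top ranges, and sum the per-$q$ bounds over $q \geq \psi(X)$. The only cosmetic slip is the intermediate chain $\ell_a^*(p) \leq \frac{p-1}{q} < \frac{p-1}{\psi(X)} \leq \frac{p^{1/2}}{\log p}$, which as written does not follow; the correct justification is the one you give parenthetically, namely that $p < q^2/(4\log^2 q)$ forces $p/q < p^{1/2}/\log p$.
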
 

\begin{proof}

Suppose that $n \leq X$ and $q = P(\lambda(n)/\ell^*_a(n)) \geq \psi(X).$ We may assume that $X$ is large, so $a$ is not a $q^{th}$ power and $\psi(X) > a$. Moreover, as we will now show, it must be the case that either $q^2 \mid n$ or $p \mid n$ for some $p \in A_q$. Observe that $$q \mid \frac{\lambda(n)}{\ell^*_a(n)} \mid \frac{\mathrm{lcm}_{p^{e} \mid \mid n} \left[\lambda(p^{e})\right]}{\mathrm{lcm}_{p^{e} \mid \mid n} \left[\ell^*_a(p^e)\right]} \mid \mathrm{lcm}_{p^{e} \mid \mid n} \left[\frac{\lambda(p^e)}{\ell^*_a(p^e)}\right].$$ In particular, $q$ must divide $\frac{\lambda(p^e)}{\ell^*_a(p^e)}$ for some prime $p$. If $q = p,$ then $q \mid \lambda(p^{e})$ implies that $e \geq 2,$ so $q^2 \mid n.$ If $q \neq p$, then $q \mid \frac{\lambda(p)}{\ell^*_a(p)}$, so $p > q > \psi(X) > a$. Thus, $\ell^*_a(p) = \ell_a(p) \mid \frac{p-1}{q}$, so $p \mid a^{\frac{p-1}{q}} - 1,$ which implies that $p \in A_q.$ 

To handle the case where $q^2 \mid n$, we observe that \begin{align*}\#\{n \leq X: q^2 \mid n \ \hbox{for some prime} \ q \geq \psi(X)\} & \leq \sum_{\substack{q \geq \psi(X) \\ q \ \mathrm{prime}}} \left\lfloor \frac{X}{q^2} \right\rfloor \\ & \leq X \sum_{\substack{q \geq \psi(X) \\ q \ \mathrm{prime}}} \frac{1}{q^2} \ll \frac{X}{\psi(X)}.\end{align*} Thus, we may assume that $n$ is divisible by a prime $p \in A_q$ with $p > a$.

By Lemma \ref{lp1}, we may assume that $\ell^*_a(p) \geq p^{1/2}/\log p.$ However, since $p \in A_q$ implies that $a^{\frac{p-1}{q}} \equiv 1 \pmod{p}$, then $\ell_a(p) \leq \frac{p-1}{q}$, so $p > \frac{q^2}{(4 \log^2 q)}.$ Thus, we can use Lemmas \ref{lp2} and \ref{lp3} to deal with the remaining values of $n \leq X$. In particular, we have \begin{align}\notag & \#\{n \leq X: p \mid n \ \hbox{for some} \ p \in A_q \ \hbox{with} \ p > q^2/(4 \log^2 q)\} \\ \notag & \leq \#\{n \leq X: p \mid n \ \hbox{for some} \ p \equiv 1 \pmod{q} \ \hbox{with} \ p \in (\frac{q^2}{4 \log^2 q}, q^2 \log^4 q]\} \\ \notag & + \#\{n \leq X: p \mid n \ \hbox{for some} \ p \in A_q \ \hbox{with} \ p \geq q^2 \log^4 q\} \\ & \label{lp23} \ll \frac{X \log \log q}{q \log q} + \frac{X}{q \log q} + \frac{X \log \log X}{q^2}, \end{align} where the final inequality follows from Lemmas \ref{lp2} and \ref{lp3}. Since our hypotheses specify that $q \geq \psi(X)$, then the bound given in \eqref{lp23} implies \begin{align*} & \#\{n \leq X: q \geq \psi(X) \ \hbox{and} \ p \mid n \ \hbox{for some} \ p \in A_q\} \\ & \ll X \sum_{q \geq \psi(X)} \left(\frac{\log \log q}{q \log q} + \frac{\log \log X}{q^2}\right) \\ & \ll \frac{X \log \log \psi(X)}{\log \psi(X)}.\end{align*} \end{proof}

%%%Key lemma
\section{Key lemma}

The key to proving Theorem \ref{GRH20} rests in showing that $\ell_p^*(n)$ is usually not too small. We make this statement precise with the following lemma: 

\begin{lemma}\label{cor2}(GRH) Let $\theta$ be a constant satisfying $\frac{1}{10} \leq \theta \leq \frac{9}{10}$. Let $Y = e^{110(\log X)^\theta (\log \log X)^2}.$ For all $a > 1$ and $X$ sufficiently large, uniformly in $\theta$, we have \begin{equation}\label{GRHppractical}\#\{n \leq X: \ell^*_a(n) \leq \frac{X}{Ye^{(\log X)^\theta}}\} \ll \frac{X}{(\log X)^{\theta} \log \log X}.\end{equation} \end{lemma}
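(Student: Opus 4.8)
The plan is to bound the count of $n \leq X$ with $\ell^*_a(n)$ too small by relating the size of $\ell^*_a(n)$ to the largest prime factor of $\lambda(n)/\ell^*_a(n)$, so that Lemma \ref{prop1} can be applied with a suitable choice of $\psi$. First I would observe that $\ell^*_a(n)$ is always a divisor of $\lambda(n)$, and that $\lambda(n) \leq n \leq X$. The point is that if $\ell^*_a(n)$ is small — say $\ell^*_a(n) \leq X/(Ye^{(\log X)^\theta})$ — while $\lambda(n)$ itself is close to $n$ (which it is for all but a sparse set of $n$), then the quotient $\lambda(n)/\ell^*_a(n)$ must be large, and in particular it must have a large prime factor unless it is unusually smooth. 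So the proof splits into three ranges: (i) $n$ for which $\lambda(n)$ is itself small, i.e. $\lambda(n) \leq X/(Y^{1/2} e^{(\log X)^\theta/2})$ or so; (ii) $n$ for which $\lambda(n)/\ell^*_a(n)$ is large but smooth, i.e. $P(\lambda(n)/\ell^*_a(n)) < \psi(X)$ for our chosen $\psi$; and (iii) the remaining $n$, which have $P(\lambda(n)/\ell^*_a(n)) \geq \psi(X)$ and are controlled directly by Lemma \ref{prop1}.

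For range (i), I would invoke a standard estimate on the distribution of $n$ with $\lambda(n)$ abnormally small: by a theorem of Erd\H os--Pomerance--Schmutz (or the cruder bounds that suffice here), the number of $n \leq X$ with $\lambda(n) \leq X/Z$ is $O(X/\exp(c\sqrt{\log Z}))$ or better, and for $Z$ roughly $\exp((\log X)^\theta(\log\log X)^2)$ this is comfortably $o(X/((\log X)^\theta \log\log X))$; in fact one can get away with the even simpler observation that $\lambda(n)$ small forces $n$ to have many prime factors $p$ with $p-1$ smooth, which is a sparse condition. I will choose the threshold in (i) precisely so that $\lambda(n)$ larger than that threshold, together with $\ell^*_a(n)$ below the target bound, forces $\lambda(n)/\ell^*_a(n) > Y^{1/2}$ (say), which in turn means that if this quotient were $\psi(X)$-smooth with $\psi(X)$ chosen appropriately small relative to $Y$, it would need many prime factors — but actually the cleaner route is: set $\psi(X)$ so that $\log\log\psi(X)/\log\psi(X)$ matches the target $1/((\log X)^\theta \log\log X)$, which forces $\log\psi(X) \asymp (\log X)^\theta \log\log X$, i.e. $\psi(X) = \exp(c(\log X)^\theta \log\log X)$, and then note $Y$ is a larger power of this type, so $\lambda(n)/\ell^*_a(n)$ being $\psi(X)$-smooth and exceeding $Y^{1/2}$ is itself a sparse event handled by counting smooth-supported divisors.

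For range (ii), the $\psi(X)$-smooth numbers up to $X$ that can arise as $\lambda(n)/\ell^*_a(n)$: since this quotient exceeds roughly $Y^{1/2} = \exp(55(\log X)^\theta(\log\log X)^2)$ and is built only from primes below $\psi(X) = \exp(O((\log X)^\theta\log\log X))$, it must have at least $\asymp \log\log X$ prime factors (counted with multiplicity); but $\lambda(n)/\ell^*_a(n)$ dividing $\lambda(n)$ and $\lambda(n)$ smooth-ish is again controlled by the Erd\H os--Pomerance--Schmutz circle of results, or more elementarily by the bound that the number of $n \leq X$ such that $\lambda(n)$ has a $\psi(X)$-smooth divisor exceeding $Y^{1/2}$ is small — each such $n$ must be divisible by primes $\ell$ with $\ell - 1$ contributing the smooth part, and a sieve/Mertens argument caps this. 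Range (iii) is immediate from Lemma \ref{prop1}: with $\psi(X) = \exp(c(\log X)^\theta \log\log X)$ one has $\psi(X) = o(X)$ and $\psi(X) \geq \log\log X$, so the lemma applies and gives $O(X\log\log\psi(X)/\log\psi(X)) = O(X/((\log X)^\theta \log\log X))$ after checking $\log\log\psi(X) \asymp \log\log X$ and $\log\psi(X) \asymp (\log X)^\theta \log\log X$; the uniformity in $\theta \in [1/10, 9/10]$ is what pins down the constant $110$ and the $(\log\log X)^2$ in the exponent defining $Y$.

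The main obstacle I anticipate is range (ii) — making rigorous the claim that a large $\psi(X)$-smooth divisor of $\lambda(n)$ is rare — together with tracking the numerology so that all three error terms genuinely land below $X/((\log X)^\theta \log\log X)$ uniformly for $\theta \in [1/10,9/10]$. The balancing is delicate because $\psi(X)$ must be large enough that Lemma \ref{prop1}'s output is small, yet small enough (relative to $Y$) that a $\psi(X)$-smooth number exceeding $X/(Ye^{(\log X)^\theta})$-ish forces enough prime factors to be sparse; the generous constant $110$ and the extra $(\log\log X)^2$ factor in $Y$ are presumably there to give exactly this room. I would handle range (ii) by writing the offending smooth divisor as $\prod \ell_i^{a_i}$, bounding the number of possible such products by a Rankin-type argument ($\sum_{m\ \psi\text{-smooth},\ m > T} 1/m \ll$ small), and then summing $X/m$ over admissible $m$; the key quantitative input is that the number of $\psi(X)$-smooth integers in $(T, X]$ with $T = \exp(\Theta((\log X)^\theta(\log\log X)^2))$ and smoothness bound $\exp(\Theta((\log X)^\theta\log\log X))$ is $o(X/((\log X)^\theta\log\log X))$, which follows from standard $\Psi(x,y)$ estimates (de Bruijn) since $\log T/\log\psi(X) \asymp \log\log X \to \infty$.
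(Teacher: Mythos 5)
Your three-way decomposition (small $\lambda(n)$; large smooth part of $\lambda(n)$; large prime factor of $\lambda(n)/\ell^*_a(n)$ fed into Lemma \ref{prop1}) is exactly the paper's skeleton, and your range (i) is handled there by the Friedlander--Pomerance--Shparlinski bound much as you suggest. But there are two concrete problems. First, your numerology for $\psi$ is off by a factor of $\log\log X$: with $\log\psi(X)\asymp(\log X)^\theta\log\log X$ you get $X\log\log\psi(X)/\log\psi(X)\asymp X/(\log X)^\theta$, not $X/((\log X)^\theta\log\log X)$. To land on the target you must take $\log\psi(X)\asymp(\log X)^\theta(\log\log X)^2$, i.e.\ $\psi(X)$ of the same shape as $Y$ --- which is what the paper does, with $\psi(X)=Ye^{(\log X)^\theta}$ --- and this choice destroys the ``many prime factors'' leverage you were counting on in range (ii), since a $\psi(X)$-smooth number can then exceed $Y$ with $O(1)$ prime factors. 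The paper escapes this tension by using two different smoothness cutoffs: the smooth part $u(n)$ of $\lambda(n)$ is taken with respect to $B=e^{(\log X)^\theta}$ (no $\log\log$ factor at all), so that $u(n)>Y=B^{110(\log\log X)^2}$ forces $\Omega(\varphi(n))\geq 110(\log\log X)^2$, while Lemma \ref{prop1} is invoked with the much larger $\psi(X)=Ye^{(\log X)^\theta}$.

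Second, and more seriously, your method for range (ii) would not work as described. Counting the $n$ for which $\lambda(n)$ has a large $\psi$-smooth divisor is not a matter of summing $X/m$ over smooth $m>T$: the density of $n$ with $m\mid\lambda(n)$ is vastly larger than $1/m$ (already $q\mid\lambda(n)$ for almost all $n$ when $q$ is a fixed small prime, via any prime factor $p\equiv 1\pmod{q}$ of $n$), and the smooth part of $\lambda(n)$ accumulates multiplicatively over all prime factors of $n$, so you cannot reduce to a single prime $\ell\mid n$ with $\ell-1$ contributing the whole smooth part. Your fallback observation that the offending divisor must have $\gg\log\log X$ prime factors also does not isolate a sparse set: $\Omega(\varphi(n))$ is normally of size $\asymp(\log\log X)^2$ (Erd\H{o}s--Pomerance), so nothing short of $C(\log\log X)^2$ prime factors is rare. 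The missing ingredient is the Luca--Pollack bound (Lemma \ref{lucapollack}): all but $O(X/(\log X)^3)$ of the $n\leq X$ have $\Omega(\varphi(n))<110(\log\log X)^2$, and since $u(n)\mid\varphi(n)$ with every prime factor of $u(n)$ at most $B$, the inequality $u(n)>Y$ forces $\Omega(\varphi(n))\geq\log Y/\log B=110(\log\log X)^2$. This is precisely why the constant $110$ and the exponent $(\log\log X)^2$ appear in the definition of $Y$, and without this (or an equivalent large-deviation input for $\Omega(\varphi(n))$) your range (ii) is a genuine gap.
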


Before we prove Lemma \ref{cor2}, we will introduce three additional results, the first of which is due to Friedlander, Pomerance and Shparlinski \cite{fps} and the last of which is due to Luca and Pollack \cite{lucapollack}. 

\begin{lemma}\label{originalfrposh} For sufficiently large numbers $X$ and for $\Delta \geq (\log \log X)^3$, the number of positive integers $n \leq X$ with $$\lambda(n) \leq n \exp(-\Delta)$$ is at most $X \exp (-0.69 (\Delta \log \Delta)^{1/3}).$\end{lemma}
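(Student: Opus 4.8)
The plan is to pass from the multiplicative inequality $\lambda(n)\le n\exp(-\Delta)$ to a statement about the integer $\varphi(n)/\lambda(n)$, and then to count those $n$ whose ``defect'' $\varphi(n)/\lambda(n)$ is abnormally large. Since $\lambda(n)\mid\varphi(n)$, the quotient is a positive integer, and from $\lambda(n)\le n\exp(-\Delta)$ we get
\[
\frac{\varphi(n)}{\lambda(n)}\ \ge\ e^{\Delta}\,\frac{\varphi(n)}{n}.
\]
Because $\varphi(n)/n\gg 1/\log\log X$ for every $n\le X$ (the minimal order of Euler's function), and because the hypothesis $\Delta\ge(\log\log X)^3$ makes $\log\log X\le\Delta^{1/3}$ negligible beside $\Delta$, this already forces $\varphi(n)/\lambda(n)\ge V$ with $\log V=\Delta-O(\log\Delta)$. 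Hence it suffices to bound $\#\{n\le X:\varphi(n)/\lambda(n)\ge V\}$; the lower-order gap between $\log V$ and $\Delta$ is harmless and can be absorbed into the constant $0.69$.

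For the second step I would exploit the explicit shape of the defect. Writing $v_\ell$ for the $\ell$-adic valuation (and noting that the powerful part of $n$ and the prime $2$ contribute only easily controlled terms, so that it suffices to treat the squarefree part), one has
\[
\log\frac{\varphi(n)}{\lambda(n)}\ =\ \sum_{\ell}\Big(\sum_{p\mid n}v_\ell(p-1)\ -\ \max_{p\mid n}v_\ell(p-1)\Big)\log\ell .
\]
From this one sees that a positive contribution at a prime power $q=\ell^{\,j}$ occurs only when $n$ is divisible by at least two primes $p\equiv 1\pmod{q}$. Thus $\varphi(n)/\lambda(n)\ge V$ forces the sum of $\log q$, taken over those prime powers $q$ for which $n$ has two prime divisors $\equiv 1\pmod q$, to exceed $\log V$.

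To count such $n$ I would invoke the elementary upper bound
\[
\#\{n\le X:\ n\ \text{is divisible by at least}\ r\ \text{primes from a set}\ \mathcal{P}\}\ \ll\ \frac{X}{r!}\Big(\sum_{p\in\mathcal{P}}\frac{1}{p}\Big)^{r},
\]
applied with $\mathcal{P}=\mathcal{P}_q=\{p\le X:\ p\equiv1\pmod q\}$, for which Mertens' theorem together with Brun--Titchmarsh yields $\sum_{p\in\mathcal{P}_q}1/p\ll(\log\log X)/\varphi(q)$. Combining these estimates across the relevant moduli $q$, and discarding the thin set of $n$ for which $\omega(n)$, the number of distinct prime factors of $n$, exceeds a suitable multiple of $\log\log X$ (via the Hardy--Ramanujan inequality), bounds the number of $n$ accumulating defect at least $V$ by $X\exp(-\Phi)$, where $\Phi$ is the least ``cost'' of such an accumulation.

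The main obstacle is organizing this count so as to extract the clean uniform bound of the stated form. One must balance three competing resources: the number of prime factors pressed into service, the sizes of the moduli $q$ (each contributing $\log q$ toward $\log V$ but costing density roughly $(\log\log X)/\varphi(q)$ per enlisted prime), and the combinatorial factor $1/r!$, all subject to the constraint that the harvested defect reach $\log V\approx\Delta$. It is this constrained parameter optimization, carried out uniformly in $\Delta$, that produces the exponent $\tfrac13$, the shape $(\Delta\log\Delta)^{1/3}$, and the explicit constant $0.69$; verifying that the configurations set aside along the way (high prime powers dividing a single $p-1$, the powerful part of $n$, and the prime $2$) never dominate is the accompanying bookkeeping. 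By comparison, the reduction of the first paragraph and the sieve inequalities of the third are routine.
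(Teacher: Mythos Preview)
The paper does not prove this lemma at all: it is simply quoted verbatim as a result of Friedlander, Pomerance and Shparlinski \cite{fps}, with no argument given. So there is no ``paper's own proof'' to compare your proposal against.

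That said, your outline is in the right spirit and tracks the strategy of the original source fairly closely. The reduction in your first paragraph is correct: from $\lambda(n)\le n e^{-\Delta}$ and the classical bound $\varphi(n)/n\gg 1/\log\log n$ one gets $\varphi(n)/\lambda(n)\ge e^{\Delta-O(\log\Delta)}$, and the loss is indeed negligible. The decomposition of $\log(\varphi(n)/\lambda(n))$ via $\ell$-adic valuations and the observation that a contribution at $q=\ell^j$ requires at least two prime divisors $p\equiv 1\pmod q$ are the heart of the FPS argument as well. Where your sketch remains a sketch is exactly where you say it does: the three-way optimization among the number of enlisted primes, the sizes of the moduli, and the combinatorial savings is the substantive work, and getting the explicit constant $0.69$ out of it requires careful bookkeeping that you have not attempted. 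For the purposes of the present paper, however, all of that is unnecessary; a citation suffices.
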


\begin{corollary}\label{frposh} Let $\theta$ be as in Lemma \ref{cor2}. For sufficiently large $X$, the number of positive integers $n \leq X$ with $$\lambda(n) \leq \frac{X}{e^{(\log X)^\theta}}$$ is at most $X/e^{(\log X)^{\theta/3}}.$ \end{corollary}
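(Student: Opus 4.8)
The plan is to derive this directly from Lemma~\ref{originalfrposh} with a suitable choice of $\Delta$, after first removing the integers $n$ that are much smaller than $X$. The need for that preliminary step is the one genuinely delicate point: Corollary~\ref{frposh} hypothesizes a bound on $\lambda(n)$ in terms of $X$, whereas Lemma~\ref{originalfrposh} wants a bound in terms of $n$ itself, and the two coincide only when $n$ is close to $X$.

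So first I would split the count at $n = X/e^{\frac12(\log X)^\theta}$. The integers $n \le X/e^{\frac12(\log X)^\theta}$ number at most $X/e^{\frac12(\log X)^\theta}$, and since $\frac12(\log X)^\theta$ exceeds $(\log X)^{\theta/3}+\log 2$ for all large $X$ uniformly in $\theta\in[\frac1{10},\frac9{10}]$ (writing $u=(\log X)^{\theta/3}\ge(\log X)^{1/30}$, one has $\frac12(\log X)^\theta-(\log X)^{\theta/3}=\frac12u^3-u\to\infty$), this batch contributes at most $\frac12 X/e^{(\log X)^{\theta/3}}$. For the remaining $n$, namely those with $X/e^{\frac12(\log X)^\theta}<n\le X$ and $\lambda(n)\le X/e^{(\log X)^\theta}$, I would observe that
$$\lambda(n)\le \frac{X}{e^{(\log X)^\theta}}=\frac{X}{e^{\frac12(\log X)^\theta}}\cdot\frac{1}{e^{\frac12(\log X)^\theta}}< n\exp\!\left(-\tfrac12(\log X)^\theta\right),$$
so each such $n$ satisfies $\lambda(n)\le n\exp(-\Delta)$ with $\Delta:=\frac12(\log X)^\theta$.

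It then remains to feed this into Lemma~\ref{originalfrposh} and check the arithmetic. Since $\theta\ge\frac1{10}$, we have $\Delta\ge\frac12(\log X)^{1/10}\ge(\log\log X)^3$ for $X$ large (uniformly in $\theta$), so Lemma~\ref{originalfrposh} bounds the number of these $n$ by $X\exp\!\left(-0.69(\Delta\log\Delta)^{1/3}\right)$. Now $\log\Delta=\theta\log\log X-\log2\ge\frac1{20}\log\log X$ eventually, whence $\Delta\log\Delta\ge\frac1{40}(\log X)^\theta\log\log X$ and
$$0.69(\Delta\log\Delta)^{1/3}\ge \frac{0.69}{40^{1/3}}(\log X)^{\theta/3}(\log\log X)^{1/3}\ge (\log X)^{\theta/3}+\log 2$$
once $(\log\log X)^{1/3}$ is large enough, a threshold on $X$ that does not depend on $\theta$. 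Hence this batch also contributes at most $\frac12 X/e^{(\log X)^{\theta/3}}$, and adding the two contributions yields the claimed bound $X/e^{(\log X)^{\theta/3}}$. As indicated, I expect the only real care to be needed in the size-splitting at the start and in confirming that each ``for $X$ large'' above can be taken uniform over $\theta\in[\frac1{10},\frac9{10}]$; the rest is routine estimation.
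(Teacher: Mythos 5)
Your proposal is correct and follows essentially the same route as the paper: split off the $n$ below a threshold, convert the bound $\lambda(n)\le X/e^{(\log X)^\theta}$ into $\lambda(n)\le n\exp(-\Delta)$ with $\Delta=\tfrac12(\log X)^\theta$ for the remaining $n$, and apply Lemma~\ref{originalfrposh}. The only (immaterial) difference is your cut point $X/e^{\frac12(\log X)^\theta}$ versus the paper's $X/e^{(\log X)^{\theta/2}}$; both lead to the same $\Delta$ and the same final bound.
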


\begin{proof} Trivially, there are at most $X/\exp((\log X)^{\theta/2})$ values of $n \leq X/\exp((\log X)^{\theta/2})$ with $\lambda(n) \leq X/\exp((\log X)^\theta).$ On the other hand, if $X/\exp((\log X)^{\theta/2}) < n \leq X$, then $X \leq n \exp((\log X)^{\theta/2}).$ Thus, for large $X$, we have \begin{align*}\#\left\{\frac{X}{e^{(\log X)^{\theta/2}}} < n \leq X : \lambda(n) \leq \frac{X}{e^{(\log X)^\theta}}\right\} & \leq \#\left\{ n \leq X : \lambda(n) \leq \frac{n e^{(\log X)^{\theta/2}}}{e^{(\log X)^\theta}}\right\} \\ & < \#\left\{n \leq X : \lambda(n) \leq \frac{n}{e^{\frac{1}{2}(\log X)^\theta}}\right\}.\end{align*} Applying Lemma \ref{originalfrposh} with $\Delta = \frac{1}{2}(\log X)^\theta$, we see that this is at most $X/\exp(2(\log X)^{\theta/3}).$ Therefore, $$\#\left\{n \leq X: \lambda(n) \leq \frac{X}{e^{(\log X)^\theta}} \right\} \leq \frac{X}{e^{(\log X)^{\theta/2}}} + \frac{X}{e^{2(\log X)^{\theta/3}}} \leq \frac{X}{e^{(\log X)^{\theta/3}}}.$$

\end{proof}

\begin{lemma}\label{lucapollack} But for $O(\frac{X}{(\log X)^3})$ choices of $n \leq X$, we have $$\Omega(\varphi(n)) < 110(\log \log X)^2.$$ \end{lemma}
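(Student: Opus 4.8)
The plan is to begin from the identity
\[
\Omega(\varphi(n)) \;=\; \bigl(\Omega(n)-\omega(n)\bigr)\;+\;\sum_{p\mid n}\Omega(p-1),
\]
which is immediate from $\varphi(n)=\prod_{p^{e}\|n}p^{e-1}(p-1)$, and to discard two thin exceptional sets. First, by Rankin's trick together with the classical estimate $\sum_{n\le X}z^{\Omega(n)}\ll_{z}X(\log X)^{z-1}$ (valid for any fixed $z<2$), taking $z=3/2$ gives
\[
\#\{n\le X:\Omega(n)\ge 10\log\log X\}\;\le\;z^{-10\log\log X}\sum_{n\le X}z^{\Omega(n)}\;\ll\;X(\log X)^{\tfrac12-10\log\tfrac32},
\]
and since $\tfrac12-10\log\tfrac32<-3$ this set has size $O(X/(\log X)^{3})$; for $n$ outside it we have $\omega(n)\le\Omega(n)<10\log\log X$, and hence also $\Omega(n)-\omega(n)<10\log\log X$.

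Second, I would bound the number of $n\le X$ divisible by some prime $p$ with $\Omega(p-1)\ge 10\log\log X$. Each such $n$ is a multiple of at least one such prime, so by a union bound and Rankin's trick, for any fixed $y\in(1,2)$,
\[
\#\{n\le X:\ \exists\,p\mid n\text{ with }\Omega(p-1)\ge 10\log\log X\}\;\le\;X\sum_{\substack{p\le X\\\Omega(p-1)\ge 10\log\log X}}\frac1p\;\le\;X\,y^{-10\log\log X}\sum_{p\le X}\frac{y^{\Omega(p-1)}}{p}.
\]
Writing $m=p-1$ and then dropping the requirement that $m+1$ be prime bounds the last sum by $\sum_{m\le X}y^{\Omega(m)}/m$, and this classical sum satisfies
\[
\sum_{m\le X}\frac{y^{\Omega(m)}}{m}\;\le\;\prod_{q\le X}\Bigl(1-\frac{y}{q}\Bigr)^{-1}\;\ll_{y}\;(\log X)^{y},
\]
the product estimate following from Mertens' theorem because its logarithm is $y\sum_{q\le X}1/q+O(1)$ — this is exactly where $y<2$ enters. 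Combining, the set in question has size $\ll_{y}X(\log X)^{y-10\log y}$, which is $O(X/(\log X)^{3})$ as soon as $10\log y-y>3$; the choice $y=9/5$ works, since $10\log\tfrac95-\tfrac95>4$.

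For every $n\le X$ outside the two sets just removed we have $\omega(n)\le\Omega(n)<10\log\log X$ and $\Omega(p-1)<10\log\log X$ for each $p\mid n$, so the opening identity gives
\[
\Omega(\varphi(n))\;\le\;\Omega(n)+\omega(n)\max_{p\mid n}\Omega(p-1)\;<\;10\log\log X+100(\log\log X)^{2}\;<\;110(\log\log X)^{2}
\]
as soon as $\log\log X>1$, and the lemma follows, the remaining bounded range of $X$ being absorbed into the implied constant. The whole argument is elementary, resting only on Rankin's trick and Mertens' theorem; the single point that calls for care is the arithmetic of the constants — the cutoffs must be taken large enough (here $10\log\log X$) that the two exceptional sets are each $O(X/(\log X)^{3})$, yet small enough that the product $10\cdot10=100$ of the two cutoff constants stays below $110$.
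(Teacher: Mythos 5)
Your proof is correct. Note that the paper itself gives no argument for this lemma --- it is imported wholesale from Luca and Pollack \cite{lucapollack} --- so what you have produced is a self-contained elementary substitute rather than a reconstruction of anything in the text. Your two Rankin-type estimates are both sound: the exponent $\tfrac12-10\log\tfrac32\approx-3.55$ in the first step and $\tfrac95-10\log\tfrac95\approx-4.08$ in the second are each below $-3$, the Euler-product bound $\sum_{m\le X}y^{\Omega(m)}/m\ll_y(\log X)^y$ legitimately requires only $y<2$ (so that the factor at $q=2$ is finite), and the opening identity $\Omega(\varphi(n))=\Omega(n)-\omega(n)+\sum_{p\mid n}\Omega(p-1)$ together with the two cutoffs $10\log\log X$ does give $\Omega(\varphi(n))<10\log\log X+100(\log\log X)^2<110(\log\log X)^2$ once $\log\log X>1$. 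The one stylistic caveat is that you lean on the ``classical'' estimate $\sum_{n\le X}z^{\Omega(n)}\ll_z X(\log X)^{z-1}$ for fixed $z<2$ without proof; that is a standard fact (and for an upper bound it can itself be derived from the same Euler-product trick you use in the second step, via $\sum_{n\le X}z^{\Omega(n)}\le X\sum_{n\le X}z^{\Omega(n)}/n\le X\prod_{q\le X}(1-z/q)^{-1}\ll_z X(\log X)^{z}$, which costs one extra power of $\log X$ but still lands below $X/(\log X)^3$ since $\tfrac32-10\log\tfrac32<-2.5$ --- hmm, that weaker version would only give $(\log X)^{3/2-10\log(3/2)}=(\log X)^{-2.55}$, so if you go that route you should raise the cutoff slightly, e.g.\ to $12\log\log X$, or simply cite the sharper Selberg--Delange form as you did). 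As written, with the cited form of the estimate, the argument is complete and the constants check out.
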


We will use these results in the proof of Lemma \ref{cor2}, which we present below.

\begin{proof} Let $\theta$ be such that $\frac{1}{10} \leq \theta \leq \frac{9}{10}$, let $B = e^{(\log X)^{\theta}}$ and let $u(n)$ denote the $B$-smooth part of $\lambda(n)$. Let $Y$ be defined as in the statement of Lemma \ref{cor2}. If $\lambda(n)$ has a large $B$-smooth part, say $u(n) > Y$, then so does $\varphi(n)$, since $u(n)$ must divide $\varphi(n)$ as well. First, we will estimate the number of $n \leq X$ for which $u(n) > Y.$ Let $\Omega(u(n)) = k$. By definition, all prime factors of $u(n)$ are at most $e^{(\log X)^{\theta}}$. Thus, we have $$Y < u(n) \leq (e^{(\log X)^{\theta}})^k.$$ Solving for $k$, we obtain $k \geq 110(\log \log X)^2$. However, Lemma \ref{lucapollack} implies that $k < 110 (\log \log X)^2$ except for $O(\frac{X}{(\log X)^3})$ values of $n \leq X.$ Hence, we can conclude that there are at most $O(\frac{X}{(\log X)^3})$ values of $n$ for which the $B$-smooth part of $\lambda(n)$ is larger than $Y.$ Thus, using Lemma \ref{frposh}, we have \begin{align*} \#\{n \leq X: \frac{\lambda(n)}{u(n)} \leq \frac{X}{Ye^{(\log X)^\theta}}\} & \leq \#\{n \leq X: \lambda(n) \leq \frac{X}{e^{(\log X)^\theta}}\} + \#\{n\leq X: u(n) > Y\} \\ & \ll \frac{X}{e^{(\log X)^{\theta/3}}} + \frac{X}{(\log X)^{3}}.\end{align*} However, if we take $\psi(X) = Y \exp\{(\log X)^{\theta}\}$ then we can use Lemma \ref{prop1} to show that, for all but $O(\frac{X}{(\log X)^{\theta} \log \log X})$ choices of $n \leq X$, we have $\frac{\lambda(n)}{u(n)} \mid \ell^*_a(n).$ Therefore, we have $$\ell^*_a(n) \geq \frac{\lambda(n)}{u(n)} > \frac{X}{Ye^{(\log X)^\theta}},$$ except for at most $O(\frac{X}{(\log X)^{\theta} \log \log X})$ values of $n \leq X$. \end{proof}

%%%Density considerations for $p$-practical numbers
\section{Proof of Theorem \ref{GRH20}}

In this section, we present the proof of our main theorem. We begin by discussing the remaining lemmas that we will need in order to complete the argument. Let $n$ be a positive integer, with $d_1 < d_2 < \cdots < d_{\tau(n)}$ its increasing sequence of divisors. Let $Z \geq 2.$ We say that $n$ is $Z$-dense if $\max_{1 \leq i \leq \tau(n)} \frac{d_{i+1}}{d_i} \leq Z$ holds. The following lemma, due to Saias (cf. \cite[Theorem 1]{saias}), describes the count of integers with $Z$-dense divisors.

\begin{lemma}[Saias]\label{saias1} For $X \geq Z \geq 2$, we have \begin{align}\label{saiasppractical}\# \{n \leq X: n \ \hbox{is} \ Z\hbox{-dense}\} \ll \frac{X \log Z}{\log X}.\end{align} \end{lemma}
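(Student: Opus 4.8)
Since this is Theorem~1 of Saias \cite{saias}, nothing genuinely new is required here; but let me sketch the strategy one would follow to obtain the upper bound, which is the only direction used below. First one disposes of trivial ranges: if $Z \geq X^{1/2}$ then $\log Z \geq \tfrac12\log X$, so $\frac{X\log Z}{\log X}\gg X$ and \eqref{saiasppractical} holds just from the bound $\#\{n\le X\}\le X$; and the at most $\sqrt X = O(X/\log X)$ integers $n\le\sqrt X$ may be set aside. So one assumes $2\le Z\le X^{1/2}$ and $\sqrt X < n\le X$. The basic observation is that $n$ is $Z$-dense precisely when $n$ has a divisor in $(t,Zt]$ for \emph{every} real $t\in[1,n/Z]$: taking $t$ just below a putative large gap $d_{i+1}/d_i$ gives one implication immediately, and taking the largest divisor not exceeding $Zt$ gives the other. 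In particular a $Z$-dense $n\le X$ is forced to have divisors at all $\asymp \log X/\log Z$ geometric scales $Z,Z^2,Z^3,\dots$ up to $X$, an extremely rigid condition that one wants to turn into a recursion in $\log X$.

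The structural device that makes this work is an Eratosthenes-type factorization. Writing $p=P(n)$ and $n=mp^{v}$ with $p\nmid m$, one checks that $n$ is $Z$-dense if and only if $m$ is $Z$-dense and $p$ is small relative to the divisor structure of $m$ --- roughly $p\le Z\,d^{+}$, where $d^{+}$ is the least divisor of $m$ exceeding a threshold that depends on $p^{v}$ and on the divisor of $m$ just below it. Peeling off prime factors from the top, every $Z$-dense $n$ thus factors as an ordered product $n=m_1 m_2\cdots m_r$ with $P(m_1)\le P(m_2)\le\cdots\le P(m_r)$, each $m_i$ constrained by the partial product $m_1\cdots m_{i-1}$. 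Counting such decompositions by induction on the number of scales, and summing the resulting essentially geometric series, produces the claimed bound $O\big(\frac{X\log Z}{\log X}\big)$.

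The main obstacle is the bookkeeping in that final count, and this is where the argument stops being routine. A crude one-step recursion --- split $n=ab$ at a divisor $a$ near $\sqrt X$ and recurse on $a$ and on $b=n/a$ --- fails badly: because $n$ is $Z$-dense one can only place $a$ in $(\sqrt X/Z,\sqrt X]$, so the complementary factor $b$ ranges up to $Z\sqrt X$, and reinserting the inductive hypothesis then manufactures spurious factors of $\log Z$, of $\log\log X$, or even of $Z$ itself, since the events ``$a$ is $Z$-dense'' and ``$b$ is $Z$-dense'' are strongly positively correlated rather than independent. Saias's argument circumvents this by carrying out the recursion over all scales simultaneously with a carefully chosen stopping rule, and it is this optimization --- not a single slick manoeuvre --- that carries the proof. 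For our purposes we simply invoke \cite[Theorem~1]{saias}.
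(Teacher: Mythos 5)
The paper offers no proof of this lemma at all --- it is simply quoted from Saias \cite[Theorem 1]{saias} --- and your proposal likewise ultimately just invokes that reference, so you are taking the same (and the intended) route. Your accompanying sketch of Saias's method and your disposal of the trivial ranges $Z \geq X^{1/2}$ and $n \leq \sqrt{X}$ are accurate, but none of it is needed here.
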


The next lemma is due essentially to Friedlander, Pomerance and Shparlinski (cf. \cite[Lemma 2]{fps}).

\begin{lemma}\label{l2} Let $n$ and $d$ be positive integers with $d \mid n$. Then, for any rational prime $p$, we have $\frac{d}{\ell^*_p(d)} \leq \frac{n}{\ell^*_p(n)}.$ \end{lemma}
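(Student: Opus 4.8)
The plan is to peel off the powers of $p$ and then reduce the remaining prime-to-$p$ statement to the case in which $n/d$ is prime, where the inequality can be read off from the structure of the group $(\Z/n\Z)^\ast$. First I would write $n = p^a n_{(p)}$ and $d = p^b d_{(p)}$ with $p \nmid n_{(p)} d_{(p)}$; since $d \mid n$ this forces $b \le a$ and $d_{(p)} \mid n_{(p)}$. By definition $\ell^\ast_p(n) = \ell_p(n_{(p)})$ and $\ell^\ast_p(d) = \ell_p(d_{(p)})$, so, using $p^b \le p^a$, the claim $\frac{d}{\ell^\ast_p(d)} \le \frac{n}{\ell^\ast_p(n)}$ reduces to the prime-to-$p$ assertion $\frac{d_{(p)}}{\ell_p(d_{(p)})} \le \frac{n_{(p)}}{\ell_p(n_{(p)})}$. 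Thus it is enough to prove: if $p \nmid n$ and $d \mid n$, then $d\,\ell_p(n) \le n\,\ell_p(d)$.

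Next I would reduce to the case $n/d$ prime. Factoring $n/d = q_1 q_2 \cdots q_s$ into (not necessarily distinct) primes produces a chain $d = m_0 \mid m_1 \mid \cdots \mid m_s = n$ with each $m_i/m_{i-1}$ prime and each $m_i$ coprime to $p$; multiplying together the one-step inequalities $\frac{m_{i-1}}{\ell_p(m_{i-1})} \le \frac{m_i}{\ell_p(m_i)}$ yields the general statement. For a single step, write $m = q m'$ with $q$ prime, $m' \mid m$, $p \nmid m$. If $q \nmid m'$, then by the Chinese Remainder Theorem $\ell_p(m) = \mathrm{lcm}(\ell_p(m'), \ell_p(q))$, so $\ell_p(m)/\ell_p(m')$ divides $\ell_p(q) \le q-1 < q = m/m'$. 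If instead $q^{c} \mid m'$ but $q^{c+1} \nmid m'$ for some $c \ge 1$, then $\ell_p(m)$ and $\ell_p(m')$ receive identical contributions from the primes other than $q$, while the reduction homomorphism $(\Z/q^{c+1}\Z)^\ast \to (\Z/q^{c}\Z)^\ast$ has kernel of order $q$, which forces $\ell_p(q^{c}) \mid \ell_p(q^{c+1}) \mid q\,\ell_p(q^{c})$; hence $\ell_p(m)/\ell_p(m')$ divides $q = m/m'$. In either case $\ell_p(m) \le \frac{m}{m'}\,\ell_p(m')$, which is exactly the one-step inequality.

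The substantive point is the prime-step bound, and within it the claim that passing from $q^{c}$ to $q^{c+1}$ multiplies the order of $p$ by at most $q$, uniformly in $q$ (the prime $q=2$ is not special here): if $p^{e} \equiv 1 \pmod{q^{c}}$ then $p^{e}$ lies in the order-$q$ kernel above, so $p^{qe} \equiv 1 \pmod{q^{c+1}}$, giving $\ell_p(q^{c+1}) \mid qe$; combined with the evident $\ell_p(q^{c}) \mid \ell_p(q^{c+1})$ this pins the ratio down to $1$ or $q$. Everything else is routine bookkeeping with least common multiples, where I anticipate no difficulty. (Alternatively, a proof can be extracted directly from \cite[Lemma 2]{fps}.)
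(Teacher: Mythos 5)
Your proof is correct, and your reduction of the general case to the prime-to-$p$ case (peeling off $p^b \le p^a$ and passing to $d_{(p)} \mid n_{(p)}$) is exactly the reduction the paper performs. Where you diverge is in what you do with the coprime case: the paper simply cites \cite[Lemma 2]{fps} for the inequality $d\,\ell_p(n) \le n\,\ell_p(d)$ when $p \nmid n$, whereas you prove it from scratch by descending along a chain $d = m_0 \mid m_1 \mid \cdots \mid m_s = n$ with prime steps and verifying, via the Chinese Remainder Theorem and the order-$q$ kernel of $(\Z/q^{c+1}\Z)^\ast \to (\Z/q^{c}\Z)^\ast$, that each prime step $q$ multiplies $\ell_p$ by a divisor of $q$. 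The details check out: in the $q \nmid m'$ case the ratio of orders divides $\ell_p(q) \le q-1$, and in the $q^c \,\|\, m'$ case the ratio $\ell_p(q^{c+1})/\ell_p(q^c)$ is $1$ or $q$, so the lcm bookkeeping does give $\ell_p(m) \mid q\,\ell_p(m')$ in both cases, uniformly in $q$ (including $q=2$). What your route buys is a self-contained proof of the lemma that does not lean on the external reference; what the paper's route buys is brevity, since the cited lemma of Friedlander--Pomerance--Shparlinski is essentially the argument you reconstructed. Either version is acceptable here.
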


\begin{proof} The result is proven in \cite{fps} when $(p, n) = 1$. In the case where $(p, n) > 1$, let $n_{(p)}$ and $d_{(p)}$ represent the largest divisors of $n$ and $d$ that are coprime to $p$, respectively. Then $$\frac{d}{d_{(p)}} \leq \frac{n}{n_{(p)}},$$ since the highest power of $p$ dividing $d$ is at most the highest power of $p$ dividing $n$. After a rearrangement, we have $$\frac{d}{n} \leq \frac{d_{(p)}}{n_{(p)}} \leq \frac{\ell^*_p(d)}{\ell^*_p(n)},$$ where the final inequality follows from the coprime case.\end{proof} 

We will also use the following elementary lemma:

\begin{lemma}\label{kappa} Let $X \geq 2$ and let $\kappa \geq 1.$ Then, we have $$\#\{n \leq X : \tau(n) \geq \kappa\} \ll \frac{1}{\kappa} X \log X.$$ \end{lemma}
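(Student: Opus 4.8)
The plan is to deduce this from the classical estimate for the summatory function of the divisor function together with a one-line application of Markov's inequality. First I would recall that, writing $\tau(n) = \sum_{d \mid n} 1$ and interchanging the order of summation,
\[
\sum_{n \leq X} \tau(n) = \sum_{d \leq X} \left\lfloor \frac{X}{d} \right\rfloor \leq X \sum_{d \leq X} \frac{1}{d} \ll X \log X,
\]
which holds for all $X \geq 2$.

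Next, observe that each integer $n \leq X$ counted on the left-hand side of the claimed bound satisfies $\tau(n) \geq \kappa$ by definition, so
\[
\kappa \cdot \#\{n \leq X : \tau(n) \geq \kappa\} \leq \sum_{\substack{n \leq X \\ \tau(n) \geq \kappa}} \tau(n) \leq \sum_{n \leq X} \tau(n) \ll X \log X.
\]
Dividing through by $\kappa$ yields $\#\{n \leq X : \tau(n) \geq \kappa\} \ll \frac{1}{\kappa} X \log X$, as desired. The implied constant is absolute, and in particular the estimate is uniform in $\kappa \geq 1$.

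There is no real obstacle here: the only external input is the well-known bound $\sum_{n \leq X} \tau(n) \ll X \log X$ (indeed one even has $\sum_{n\le X}\tau(n) = X\log X + (2\gamma-1)X + O(\sqrt X)$, though the crude upper bound suffices), and the rest is the standard Markov/counting argument. One small point worth stating explicitly is that for $\kappa > \tau(n)$ for all $n \leq X$ the left-hand side is $0$ and the inequality is trivial, so no case analysis on the size of $\kappa$ relative to $X$ is needed.
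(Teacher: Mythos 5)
Your proposal is correct and follows essentially the same argument as the paper: bound $\sum_{n \le X}\tau(n) \ll X\log X$ by interchanging the order of summation, then apply Markov's inequality to count the $n$ with $\tau(n)\ge\kappa$. The only difference is that you spell out the Markov step and the uniformity in $\kappa$ a bit more explicitly than the paper does.
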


\begin{proof} We observe that $$\sum_{n \leq X} \tau(n) = \sum_{n \leq X} \sum_{d \mid n} 1 \leq X \sum_{d \leq X} \frac{1}{d} \ll X \log X.$$ The number of terms in the sum on the left-hand side of the equation that are $\geq \kappa$ is $\ll \frac{1}{\kappa} X \log X.$ \end{proof}

Now we have all of the tools needed to prove Theorem \ref{GRH20}. Below, we present its proof. 

\begin{proof}

Let $n$ be a positive integer with divisors $d_1 < d_2 < \cdots < d_{\tau(n)}.$ Let $p$ be a rational prime with $p \nmid n$. Let $\theta$ and $Y$ be as in Lemma \ref{cor2}. In \eqref{saiasppractical}, set $Z = Y^2$. Assume that $n$ is not in the set of size $O(X \log Y^2/ \log X)$ of integers with $Y^2$-dense divisors. Then there exists an index $j$ with \begin{equation}\label{zdense} \frac{d_{j+1}}{d_j} > Y^2.\end{equation} Moreover, we can use Lemma \ref{kappa} to show that \begin{equation}\label{taulogx} \# \{n \leq X: \tau(n) > Y/e^{(\log X)^\theta}\} \ll \frac{X e^{(\log X)^\theta} \log X }{Y}.\end{equation} As a result, we will assume hereafter that $\tau(n) \leq Y/e^{(\log X)^\theta}.$ Examining the ratios $\frac{d_{k+1}}{d_k}$, we remark that it is always the case that $d_1 = 1$ and $d_2 = P^-(n);$ hence, we have $$\#\{n \leq X : \frac{d_2}{d_1} > Y^2\} = \sum_{\substack{n \leq X \\ P^-(n) > Y^2}} 1 \ll X \prod_{q \leq Y^2} \left(1 - \frac{1}{q}\right),$$ where the final inequality follows from applying Brun's Sieve (cf. \cite[Theorem 2.2]{hr}). By Mertens' Theorem (cf. \cite[Theorem 3.15]{pollack}), we have \begin{equation}\label{brunexcept} X \prod_{q \leq Y^2} \left(1 - \frac{1}{q}\right) \ll \frac{X}{\log Y}.\end{equation} 

Now, suppose that $k > 1$. On one hand, for all $k > 1$, we have \begin{equation}\label{lpeq1} 1 + \sum_{l \leq k} \ell^*_p(d_l) \frac{\varphi(d_l)}{\ell^*_p(d_l)} = 1 + \sum_{l \leq k} \varphi(d_l) \leq k d_k \leq Ye^{-(\log X)^\theta} d_k.\end{equation} On the other hand, Lemma \ref{cor2} implies that $\ell^*_p(n) > \frac{X}{Ye^{(\log X)^\theta}}$ but for \begin{align}\label{bigohppracexcept} O\left(\frac{X}{(\log X)^{\theta} \log \log X}\right)\end{align} integers $n \leq X$. For such numbers $n$, for all $i \geq 1$, we have \begin{align}\label{lpeq2} \ell^*_p(d_{j+i}) \geq \frac{\ell^*_p(n) d_{j+i}}{n} > \frac{d_{j+i}}{Y e^{(\log X)^\theta}} > \frac{d_j Y^2}{Y e^{(\log X)^\theta}} = Ye^{-(\log X)^\theta} d_j \end{align} where the inequalities follow, respectively, from Lemma \ref{l2}, Lemma \ref{cor2} and the assumption that there exists an index $j$ for which \eqref{zdense} holds. As a result, we can combine the inequality from \eqref{lpeq1} applied with $k = j$ with\eqref{lpeq2} to show that $$1 + \sum_{l \leq j} \ell^*_p(d_l) \frac{\varphi(d_l)}{\ell^*_p(d_l)} < \ell^*_p(d_{j+i})$$ holds for all $i \geq 1$. Thus, $x^n-1$ has no divisor of degree $1 + \sum_{l \leq j} \varphi(d_l)$ in $\F_p[x]$, so $n$ is not $p$-practical. Therefore, by \eqref{saiasppractical}, \eqref{taulogx}, \eqref{brunexcept} and \eqref{bigohppracexcept}, we have \begin{align}\label{finalfpineq} F_p(X) \ll \frac{X \log Y}{\log X} + \frac{X e^{(\log X)^\theta} \log X}{Y} + \frac{X}{\log Y} + \frac{X}{(\log X)^\theta \log \log X}.\end{align} Now, the only significant terms in \eqref{finalfpineq} are $\frac{X}{(\log X)^\theta \log \log X}$ and $\frac{X \log Y}{\log X}$. Equating these expressions and using the fact that $Y = e^{110(\log X)^\theta(\log \log X)^2}$, we obtain $\theta = \frac{1}{2} - \frac{3 \log_3 X}{2 \log_2 X}$ as a good choice for $\theta$. Plugging this value of $\theta$ into the bound $\frac{X}{(\log X)^\theta \log \log X}$ yields a bound of $O\left(X \sqrt{\frac{\log \log X}{\log X}}\right)$ for the size of the set of $p$-practicals up to $X$.\end{proof}

\textit{Acknowledgements.} The work contained in this paper comprises a portion of my Ph.D. thesis \cite{thompsonthesis}. I would like to thank my adviser, Carl Pomerance, for his guidance throughout the process of completing this work. I would also like to thank Paul Pollack for pointing out the relevant results in \cite{lucapollack} and for his help with simplifying a few of my arguments.


\begin{thebibliography}{10}

\bibitem{eps}
P. Erd\H{o}s, C. Pomerance, and E. Schmutz, \textit{Carmichael's lambda function}, Acta Arith. \textbf{58} no. 4 (1991), 363 -- 385.

\bibitem{fps}
 J. Friedlander, C. Pomerance, and I. E. Shparlinski, \textit{Period of the power generator and small values of the Carmichael function}, Math. Comp. \textbf{70} (2001), 1591 -- 1605. 

\bibitem{hr}
H. Halberstam and H.-E. Richert, \textit{Sieve methods}. Academic Press, London, 1974.

\bibitem{kp}
P. Kurlberg and C. Pomerance, \textit{On a problem of Arnold: the average multiplicative order of a given integer}, Algebra Number Theory (to appear). 

\bibitem{lp}
S. Li and C. Pomerance, \textit{On generalizing Artin's conjecture on primitive roots to composite moduli},  J. Reine Angew. Math. \textbf{556} (2003), 205 -- 224.

\bibitem{lucapollack}
F. Luca and P. Pollack, \textit{An arithmetic function arising from Carmichael's conjecture}, J. Th\'{e}orie des Nombres de Bordeaux \textbf{23} (2011), 697 -- 714.

\bibitem{pollack}
P. Pollack, \textit{Not always buried deep: a second course in elementary number theory}. Amer. Math. Soc., Providence, 2009.

\bibitem{pollackthompson}
P. Pollack and L. Thompson, \textit{On the degrees of divisors of $T^n-1$}, submitted, e-print available at  \url{http://arxiv.org/abs/1206.2084}.

\bibitem{saias}
E. Saias, \textit{Entiers \`{a} diviseurs denses. I.}, J. Number Theory \textbf{62} (1997), 163 -- 191.

\bibitem{thompson}
L. Thompson, \textit{Polynomials with divisors of every degree}, J. Number Theory \textbf{132} (2012), p. 1038 - 1053.

\bibitem{thompson3}
\bysame, \textit{Variations on a question concerning the degrees of divisors of $x^n-1$}, submitted, e-print available at \url{http://math.dartmouth.edu/~thompson/notes.html}. 

\bibitem{thompsonthesis}
\bysame, \textit{Products of distinct cyclotomic polynomials}, Ph.D. thesis, Dartmouth College, 2012.

\end{thebibliography}
\end{document}